\documentclass[11pt]{article}
\usepackage[utf8]{inputenc}
\usepackage[T1]{fontenc}
\usepackage{comment}
\usepackage{authblk}
\usepackage{relsize}
\usepackage{changepage}
\usepackage{mathtools}
\usepackage[english]{babel}
\usepackage{amsmath}
\usepackage{amsthm}
\usepackage{amsfonts}
\usepackage{amssymb}
\usepackage{graphicx}
\usepackage[usenames,dvipsnames]{color}
\theoremstyle{plain}
\newtheorem{theorem}{Theorem}[section]
\newtheorem{corollary}[theorem]{Corollary}
\newtheorem{proposition}[theorem]{Proposition}
\newtheorem{lemma}[theorem]{Lemma}

\makeatletter
\newcommand{\vast}{\bBigg@{4}}
\newcommand{\Vast}{\bBigg@{5}}
\makeatother
\definecolor{bulgarianrose}{rgb}{0.28, 0.02, 0.03}
\definecolor{gray}{rgb}{0.5, 0.5, 0.5}

\theoremstyle{definition}

\theoremstyle{remark}

\usepackage[left=2cm,right=2cm,top=2cm,bottom=2cm]{geometry}
\usepackage{amssymb}
\makeatletter
\def\namedlabel#1#2{\begingroup
    #2%
    \def\@currentlabel{#2}%
    \phantomsection\label{#1}\endgroup
}
\usepackage[normalem]{ulem}
\usepackage{dsfont}
\showboxdepth=5
\showboxbreadth=5
\usepackage{accents}
\usepackage{verbatim}
\usepackage{ulem}
\usepackage{pgf,tikz,pgfplots}
\pgfplotsset{compat = 1.16}
\usepackage[all]{xy} 

\usepackage{stackengine}
\stackMath
\newcommand\tsup[2][2]{%
 \def\useanchorwidth{T}%
  \ifnum#1>1%
    \stackon[-.5pt]{\tsup[\numexpr#1-1\relax]{#2}}{\scriptscriptstyle\sim}%
  \else%
    \stackon[.5pt]{#2}{\scriptscriptstyle\sim}%
  \fi%
}

\usepackage{enumerate}

\usepackage{amsmath}
\usepackage{amsfonts}
\usepackage{amsthm}
\usepackage{amssymb}
\usepackage{array}
\usepackage{booktabs}
\usepackage{color}
\usepackage{changepage}
\usepackage[english]{babel}
\usepackage{enumerate}
\usepackage{epsfig}
\usepackage{float}
\usepackage{framed}
\usepackage{gensymb}
\usepackage{graphicx}
\usepackage{hyperref}
\usepackage{indentfirst}
\usepackage{longtable}
\usepackage{mathtools}
\usepackage{multirow}
\usepackage[normalem]{ulem}
\usepackage{setspace}
\usepackage{subcaption}
\usepackage{textcomp}
\usepackage[utf8]{inputenc}
\usepackage{verbatim} 
\usepackage{enumerate}
\usepackage{xlop}
\usepackage{listings}



\def\and{%
  \end{tabular}%
  \hskip 1em \@plus.17fil%
  \begin{tabular}[t]{c}}%
  
\lstset{basicstyle=\ttfamily,breaklines=true,upquote=true,showstringspaces=false}

\title{\scshape
  Exponentially sized pointsets with angles less than 61 degrees}

\author{Miroslav Marinov}
\affil{{\tt m.marinov1617@gmail.com}}

\date{}
\begin{document}

\maketitle

\begin{abstract}
    We prove that any maximal set of points in $\mathbb{R}^d$, any three of which form an angle less than $\frac{\pi}{3} + c$, has size $(1+\Theta(c))^d$ for sufficiently small $c>0$. The proof is based on a refinement of an approach by Erd\H{o}s and F\"{u}redi. The lower bound is relying on a problem about large uniform hypergraphs with small edge intersections, while the upper bound is tightly connected to the problem of packing disjoint caps on a sphere.
\end{abstract}

\hspace{1em}Keywords: exponential size, set of points, acute angles

\hspace{1em}2020 Mathematics Subject Classification: 51M04, 52C10

\section{Introduction}
Around $1950$ Erd\H{o}s conjectured that $2^d$ is the greatest possible number of points in $\mathbb{R}^d$ such that no three of them determine an obtuse angle. This was proven in $1962$ by Danzer and Gr\"{u}nbaum (\cite{DG}, \cite{AZ}). The only examples attaining the bound turn out to be affine images of the hypercube. As it contains a lot of angles equal to $\pi/2$, the following question becomes interesting: what is the maximum number of points in $\mathbb{R}^d$ with all angles strictly less than $\pi/2$? Very recently, in 2017, Gerensc\'{e}r and Harangi \cite{GH} made a breakthrough by providing a surprisingly simple construction of $2^{d-1} + 1$ points. Regarding an upper bound, currently it is not known even whether sets of $2^{d} - 1$ points are possible.

The above problem can be naturally generalized by considering, for a positive integer $d \geq 2$ and a real number $\alpha \in (0,\pi)$, the maximal cardinality $g_{\alpha}(d)$  of a set of points in $\mathbb{R}^d$ such that any angle formed by three of the points is \textit{less than or equal to} $\alpha$; define $f_{\alpha}(d)$ analogously, requiring the angles to be \textit{strictly less than} $\alpha$. Clearly, $f_{\alpha}(d) \leq g_{\alpha}(d)$.

For $\alpha\neq \frac{\pi}{2}$ the problem was considered by Erd\H{o}s and F\"{u}redi \cite{EF}, Li and Otsetarova \cite{LO} and Mironenko \cite{M}. The value $g_{\pi/3}(d) = d+1$ appears in \cite{LO}, \cite{M} and we outline a simplified proof in Section \ref{section:pi/3}. A probabilistic argument for large non-obtuse $\alpha$ and all $d\geq 2$ is examined in \cite{LO}. Explicit approaches for $\alpha = \pi - c$ and $\alpha = \frac{\pi}{3} + c$, for small $c$ and large $d$, can be found in \cite{EF}.

Regarding $\alpha = \frac{\pi}{3} + c$ for sufficiently small $c>0$, the work of Erd\H{o}s and F\"{u}redi \cite{EF} addresses the bounds $(1+c^2)^d \leq f_{\pi/3 + c}(d) \leq (1+4c)^d$. We refine their approach on the lower bound to obtain the asymptotic $f_{\pi/3 + c}(d) = (1+\Theta(c))^d$. (All arguments apply equally well for $g$.) More precisely:
\begin{theorem}
	\label{theorem:exp-bounds}
	For any $c \in (0,1)$, $\delta>0$ and sufficiently
	large $d$,
	\begin{equation*}
	f_{\pi/3 + c}(d) > \left(1+\left(0.0094-\delta\right)c\right)^d.
	\end{equation*} 
\end{theorem}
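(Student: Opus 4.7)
My plan is to refine the Erd\H{o}s--F\"uredi construction by taking characteristic vectors of $k$-subsets of $[d]$, with $k$ chosen optimally. Writing $X, Y, Z \subseteq [d]$ for the supports of $x, y, z \in \{0,1\}^d$ with $|X| = |Y| = |Z| = k$, the angle at $y$ in the triangle $xyz$ has cosine
\[
\cos \angle xyz \;=\; \frac{k + |X\cap Z| - |X\cap Y| - |Y\cap Z|}{2\sqrt{(k-|X\cap Y|)(k-|Y\cap Z|)}}.
\]
I will produce a large family $\mathcal{F}$ of $k$-subsets whose pairwise intersections are tightly concentrated about a common value $\lambda$, so that every such cosine exceeds $\cos(\pi/3 + c) = \tfrac{1}{2} - \tfrac{\sqrt{3}}{2}\,c + O(c^2)$.

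The first step is to translate the angle constraint into a constraint on intersection sizes. Writing $a, b, e$ for the three pairwise intersection sizes and expanding the cosine to first order about $\lambda$, the worst triple has the two ``apex'' intersections $a, b$ at their upper extreme and the ``base'' intersection $e$ at its lower extreme. This yields a pairwise sufficient condition: if $|A \cap B| \in [\lambda - t, \lambda + t]$ for every distinct $A, B \in \mathcal{F}$ with $t \leq \tfrac{\sqrt{3}}{2}(k-\lambda)\,c$, then every angle formed by three members of $\mathcal{F}$ is strictly less than $\pi/3 + c$.

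The body of the argument is then the combinatorial problem of producing a large $k$-uniform family on $[d]$ with this intersection-concentration property---the ``uniform hypergraph with small edge intersections'' problem the abstract refers to. I plan either to invoke an off-the-shelf bound (a R\"odl-nibble style packing, or a Frankl--R\"odl type existence theorem) or to derive a dedicated probabilistic-deletion argument, and then to optimize over the two free parameters $\alpha = k/d$ and $\mu = \lambda/d$ to maximize the resulting exponent. The constant $0.0094$ should drop out of this optimization, reflecting the balance between the strength of the concentration / packing bound and the entropy of $k$-subsets for the chosen density.

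The main obstacle is producing a family of the right size when $k$ scales linearly with $d$ and the allowed intersection window has width $\Theta(dc)$: standard R\"odl-nibble results are proved in the regime of fixed $k$ and $\ell$, so pushing them (or alternative alteration-style probabilistic arguments) into the linear-in-$d$ regime is where most of the technical work lies. A secondary subtlety is that the pairwise intersection condition above is only sufficient, not necessary, so some care is needed to avoid bleeding the linear-in-$c$ exponent into a merely quadratic one; if a direct one-shot construction falls short of $0.0094$, an iterated or product construction combining several such families may be required to reach the claimed bound.
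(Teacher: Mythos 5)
Your skeleton matches the paper's: embed a $k$-uniform family with controlled pairwise intersections as characteristic vectors in $\{0,1\}^d$, and use the Law of Cosines (your cosine formula is correct) to turn the intersection control into an angle bound. But the substance of the theorem --- the part that actually produces the constant $0.0094$ --- is missing from your proposal. You defer the construction of the family to ``an off-the-shelf bound (R\"odl nibble, Frankl--R\"odl) or a dedicated probabilistic-deletion argument'' and the constant to ``should drop out of this optimization.'' As you yourself note, the nibble-type results are proved for fixed uniformity and do not apply in the regime needed here; so the central existence statement is left unproved, and no candidate value of the exponent is ever computed. The paper handles this with an elementary greedy argument: pick $k$-sets one at a time, and bound the number of ``bad'' $k$-sets meeting some previously chosen set in at least $\lceil ck\rceil$ elements by $i\sum_{j\geq \lceil ck\rceil}\binom{k}{j}\binom{d-k}{k-j}$, giving a family of size at least $\binom{d}{k}\big/\sum_{j\geq\lceil ck\rceil}\binom{k}{j}\binom{d-k}{k-j}$. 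The constant then comes from a Stirling/entropy computation optimizing $k\approx ad$, with the optimum at $a=c/5$, yielding exponent $\log 5-\tfrac{8}{5}=0.00944\ldots$ --- none of which appears in your plan.

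Two further points. First, your insistence on two-sided concentration of intersections about a common value $\lambda$ is an unnecessary complication: since all sets have exactly $k$ elements, the one-sided condition $|F_i\cap F_j|<ck$ already pins every squared distance into $(2k-2ck,\,2k]$, and the cosine of every angle then exceeds $\tfrac12-c>\cos(\tfrac{\pi}{3}+c)$ directly; there is no risk of the pairwise reduction ``bleeding'' the linear exponent into a quadratic one, and no iterated or product construction is needed. Second, your parameter regime is off: you speak of $k$ linear in $d$ with an intersection window of width $\Theta(cd)$, whereas the optimization forces $k=\Theta(cd)$ and threshold $\Theta(c^2d)$; guessing the wrong regime is precisely why the optimization cannot be waved through. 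As it stands the proposal is a correct outline of the Erd\H{o}s--F\"uredi framework with the two load-bearing steps --- the counting argument and the optimization --- absent.
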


\begin{theorem}
\label{theorem:exp-upper-bound}
		For any $c \in (0,0.02)$ and sufficiently large $d$,
		$$ f_{\pi/3 + c}(d) < (1+3.75c)^d. $$
	\end{theorem}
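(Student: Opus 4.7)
The approach suggested by the abstract is a spherical cap packing argument. Let $\{p_1, \dots, p_n\} \subset \mathbb{R}^d$ have all angles strictly less than $\pi/3 + c$, and let $D = \max_{i \neq j} |p_i - p_j|$. Since every triangle has its three angles summing to $\pi$ with each below $\pi/3 + c$, each angle is also above $\pi/3 - 2c$. By the Law of Sines, any two side lengths of a triangle satisfy
\begin{equation*}
\frac{\sin(\pi/3-2c)}{\sin(\pi/3+c)} \leq \frac{|p_i - p_j|}{|p_k - p_\ell|} \leq \frac{\sin(\pi/3+c)}{\sin(\pi/3-2c)},
\end{equation*}
so every pairwise distance lies in $[D(1 - \sqrt{3}\,c + O(c^2)),\, D]$. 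This is the near-equilateral structure that drives everything else.

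I would then fix $p_1$ as basepoint and consider the unit vectors $u_k = (p_k - p_1)/|p_k - p_1|$ for $k \geq 2$. Each pair $(u_k, u_\ell)$ makes angle equal to the vertex angle $\angle p_k p_1 p_\ell$, which lies in $(\pi/3 - 2c,\, \pi/3 + c)$. Hence the $n - 1$ unit vectors are contained in a spherical cap of angular radius at most $\pi/3 + c$ about any of them, and their pairwise angular separation is at least $\pi/3 - 2c$. Place a sub-cap of angular radius $\theta = \theta(c)$ about each $u_k$; disjointness requires $\theta \leq (\pi/3 - 2c)/2$, and all sub-caps sit inside a cap of radius $R = \pi/3 + c + \theta$. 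Using $\mathrm{vol}(\mathrm{cap}(\rho)) = C_d \int_0^\rho \sin^{d-2}(\phi)\,d\phi$, a volume comparison gives
\begin{equation*}
n - 1 \leq \frac{\mathrm{vol}(\mathrm{cap}(R))}{\mathrm{vol}(\mathrm{cap}(\theta))},
\end{equation*}
and asymptotic analysis via Laplace's method, together with an optimal choice of $\theta(c)$, is what should yield a bound of the form $(1+Kc)^d$.

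The main technical obstacle is extracting the precise constant $K = 3.75$. The naive packing above (at the extremal choice $\theta = \pi/6 - c$, $R = \pi/2$) only produces an excess of order $\sqrt{3}^{\,d}$ to $2^d$, vastly weaker than required. To sharpen this, I would iterate: also fix a second basepoint $p_2$ (taken as the farthest point from $p_1$, so $|p_1-p_2| = D$), and project the remaining $n-2$ points onto the perpendicular bisector hyperplane of $p_1p_2$, where they lie close to a $(d-2)$-sphere of radius $\tfrac{\sqrt{3}}{2}D$. The projected configuration carries an analogous near-equiangular constraint (with pairwise inner products clustered near $1/3$), and one can iterate the sub-cap packing in one lower dimension.

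The numerical value $K = 3.75$ should emerge as the first-order coefficient in $c$ of the logarithm of the optimised packing ratio, after carefully tracking the joint contributions of the minimum separation $\pi/3 - 2c$ and the maximum angle $\pi/3 + c$ through the dimension reduction. The delicate part is balancing two effects: a larger sub-cap radius $\theta$ shrinks the numerator/denominator ratio per step, but also tightens the disjointness window; the optimum in $c$ is what pins down the exact coefficient. A Delsarte / Kabatjanskii--Levenshtein-style linear-programming refinement in place of the crude volume estimate may also be needed to get below the natural $2^d$ barrier of elementary cap packing and match the stated constant.
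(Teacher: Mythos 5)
Your proposal correctly identifies the near-equilateral structure and the general cap-packing philosophy, but it has a genuine gap at its core, which you yourself half-acknowledge: viewing the configuration through the unit direction vectors $u_k = (p_k-p_1)/|p_k-p_1|$ from a single basepoint gives pairwise angular separations near $\pi/3$, and packing caps of angular radius near $\pi/6$ on $\mathbb{S}^{d-1}$ only yields a bound of order $2^{d/2}$ --- exponentially far from $(1+3.75c)^d$. The fixes you sketch (a second basepoint with projection onto the bisector hyperplane, or a Delsarte/Kabatjanskii--Levenshtein refinement) are not carried out, and there is no reason to expect the iteration to converge to the stated constant. The missing idea is \emph{Jung's theorem}: a set of diameter $1$ in $\mathbb{R}^d$ lies in a ball of radius $\sqrt{d/(2(d+1))} < 1/\sqrt{2}$. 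Since every pairwise distance exceeds $(1-1.744c)^2 > 1-3.488c > 1/\sqrt{2}$, the points, viewed from the \emph{circumcenter} rather than from one of the points themselves, subtend angles close to $\pi/2$, not $\pi/3$. Projecting radially onto the bounding sphere of radius $1/\sqrt{2}$ only increases distances (a small elementary lemma), and one lands in the regime where Rankin's cap-packing theorem for angular radius near $\pi/4$ gives $|S| \le d^2(1-3.488c)^{-d}$; the entire exponential factor then comes from the deficit $3.488c$, and $(1-3.488c)^{-d} < (1+3.75c)^d$ for $c<0.02$ and large $d$. This is exactly the ``natural $2^d$ barrier'' you mention, but it is broken by changing the vantage point via Jung, not by LP methods.

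A secondary but consequential error: your claim that \emph{every} pairwise distance lies in $[D(1-\sqrt{3}c+O(c^2)), D]$ does not follow directly from the Law of Sines, since two segments can only be compared that way when they share a triangle, i.e.\ a common endpoint. For two disjoint pairs you must chain through an intermediate side, which doubles the first-order coefficient from roughly $1.744c$ to roughly $3.488c$ --- and it is precisely this doubled constant that determines the final exponent $3.75$.
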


The only substantial difference between our arguments and these of Erd\H{o}s and F\"{u}redi is in the analysis of the hypergraph construction discussed in Section \ref{section:embeddinguniformsetsystem}. All other statements and proofs in the paper are not claimed to be original, but we include them for completeness, often with additional details for the sake of readability.

\section{The case $\alpha = \frac{\pi}{3}$}
\label{section:pi/3}

As the sum of angles in a triangle is $\pi$, we immediately have $f_{\alpha}(d) = 2$ for $\alpha \leq \frac{\pi}{3}$ and $g_{\alpha}(d)=2$ for $\alpha < \frac{\pi}{3}$. The following result concerns the value of $g_{\pi/3}(d)$.

\begin{proposition}
	\label{proposition:g-pi/3}
	For all $d\geq 1$, we have $g_{\pi/3}(d) = d+1$. 
\end{proposition}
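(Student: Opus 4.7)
The plan is to establish the two inequalities $g_{\pi/3}(d)\geq d+1$ and $g_{\pi/3}(d)\leq d+1$ separately, with the bulk of the work in the upper bound.

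For the lower bound, I would exhibit the $d+1$ vertices of a regular simplex in $\mathbb{R}^d$; every triangle determined by three such vertices is equilateral, so every angle equals exactly $\pi/3$, showing $g_{\pi/3}(d)\geq d+1$.

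For the upper bound, the key reduction is to prove that \emph{any} finite pointset in $\mathbb{R}^d$ with all angles at most $\pi/3$ must be \textbf{equidistant}. Once equidistance is established, the classical linear-algebra fact that an equidistant set in $\mathbb{R}^d$ has at most $d+1$ points (translate one point to the origin; the Gram matrix of the remaining difference vectors is $\frac{L^2}{2}(I+J)$, which is positive definite and hence forces linear independence) finishes the proof. To prove equidistance, I would first pick a pair $(p_i,p_j)$ realising the maximum pairwise distance $L$. For any third point $p_k$, in the triangle $p_ip_jp_k$ the side $p_ip_j$ is longest, so the opposite angle at $p_k$ is at least the other two, hence at least $\pi/3$; combined with the hypothesis this angle equals exactly $\pi/3$, which forces all three angles to equal $\pi/3$ (they sum to $\pi$ and each is at most $\pi/3$), so the triangle $p_ip_jp_k$ is equilateral. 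Consequently $|p_ip_k|=|p_jp_k|=L$ for every $k$.

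To upgrade equidistance from "every point is at distance $L$ from $p_i$ and $p_j$" to "every pairwise distance equals $L$", take any two further points $p_k,p_\ell$ and consider triangle $p_ip_kp_\ell$. Two of its sides equal $L$, so if the apex angle at $p_i$ is $\theta$ and the two base angles are each $\phi$, then $\theta+2\phi=\pi$; the constraint $\phi\leq\pi/3$ gives $\theta\geq\pi/3$, while $\theta\leq\pi/3$ by hypothesis, whence $\theta=\phi=\pi/3$ and the triangle is equilateral, giving $|p_kp_\ell|=L$. The main obstacle is making sure no case slips through in this last step (in particular, one should verify the argument handles the degenerate small-$n$ situations $n\leq 2$ trivially, and does not implicitly require $d\geq 2$, since the proposition is stated for all $d\geq 1$).
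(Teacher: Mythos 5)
Your proposal is correct and follows the same overall route as the paper: the regular simplex for the lower bound, and for the upper bound the reduction to equidistant sets followed by a linear-independence argument. Two small differences are worth noting. First, your equidistance argument (fix a maximum-distance pair, show those triangles are equilateral, then upgrade to the remaining pairs) is more roundabout than necessary: since the three angles of any triangle sum to $\pi$, the largest is always at least $\pi/3$, so the hypothesis forces \emph{every} triangle in the set to be equilateral in one step — this is exactly the paper's observation. Second, for the bound on equidistant sets the paper does not use the Gram matrix: it assumes a linear dependence $a_n - a_0 = \sum_{i=1}^{n-1} k_i(a_i - a_0)$, takes inner products with each $a_j - a_0$ to deduce $k_j = 1/n$ for all $j$, and then contradicts the triangle inequality via $|a_n - a_0| \leq \frac{n-1}{n} < 1$. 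Your alternative — observing that the Gram matrix is $\frac{L^2}{2}(I+J)$, which is positive definite, hence the difference vectors are independent — is sound and arguably the more standard route; the paper's version buys a fully elementary, self-contained computation at the cost of a slightly ad hoc contradiction. Either argument completes the proof.
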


\begin{proof}
	We simplify the approach in \cite{M}. Let $S=\{a_0, a_1, \ldots, a_n\} \in \mathbb{R}^d$ be a set with $\angle(a_i,a_j,a_k) \leq \frac{\pi}{3}$ for all $i,j,k$. This holds if and only if any triangle $a_ia_ja_k$ is equilateral which occurs if and only if the distances between points in $S$ are equal. Without loss of generality we assume these distances to be $1$. Note also that $\langle a_i - a_j, a_k - a_j \rangle = \cos \frac{\pi}{3} =  \frac{1}{2}$.
	
	For the lower bound, a working construction is the well known regular $d$-simplex -- one way to explicitly describe it in $\mathbb{R}^{d+1}$ is to take the points $a_i = (0,0,\ldots, 1/\sqrt{2},\ldots, 0)$, $1 \leq i \leq d+1$, which lie on the $d$-dimensional hyperplane $\sum_{i=1}^{d+1} x_i = 1/\sqrt{2}$.
	
	To prove the upper bound for $|S|$, it suffices to show the following weaker statement: if $n\geq d$ and $a_0, a_1, \ldots, a_{n}$ are points in $\mathbb{R}^d$ such that
	\begin{equation*}
	\label{equation:60bound}
	|a_i - a_0| = 1, \ \ \ \ \ \ \langle a_i - a_0, a_j - a_0\rangle = \frac{1}{2}
	\end{equation*}
	for all non-zero $i\neq j$, then the vectors $(a_i - a_0)$, $i=1,\ldots,n$ are linearly independent. Indeed, $\dim \mathbb{R}^d = d$ would then imply $n=d$.
	
	So suppose $a_0, a_1, \ldots, a_n$ are as above and assume, for the sake of contradiction, that $(a_i - a_0)$, $i=1,\ldots,n$ are dependent. Then there is a linear combination of them with at least one non-zero scalar -- thus by relabeling the points and dividing by this scalar, if necessary, we may treat $ a_{n} - a_0 = \sum_{i=1}^{n-1} k_i(a_i - a_0)$ for some $k_i\in \mathbb{R}$.
	
	\noindent Taking the inner product with $(a_j - a_0)$ for $j=1,\ldots, n-1$ gives 
	
	\begin{equation*}
	\frac{1}{2} = k_j + \sum_{\substack{i=1 \\ i\neq j}}^{n-1} \frac{k_i}{2} = \frac{k_j}{2} + \sum_{i=1}^{n-1} \frac{k_i}{2} = \frac{k_j}{2} + M,
	\end{equation*}
	
	\noindent and as $M$ does not depend on $j$, we get $k_j = \frac{1}{n}$ for all $j$. Substituting and using the Triangle inequality gives
	\begin{equation*}
	|a_{n} - a_0| \leq \frac{1}{n} \sum_{i=1}^{n-1} |(a_i - a_0)| = \frac{n-1}{n} < 1,
	\end{equation*}
	which is the required contradiction. \qedhere
\end{proof} 

	\section{The construction -- embedding a uniform hypergraph into the cube}
	\label{section:embeddinguniformsetsystem} 
	
	
	Recall that a hypergraph is \textit{$k$-uniform} if each edge contains exactly $k$ vertices.
	
	We construct greedily a large uniform hypergraph in which every two edges have a tiny overlap. Then we map each edge to a vertex of the hypercube in a way that this overlap results in a small range of possible distances between image points -- this would force the angles to be close to $\pi/3$.   
    
    The difference between the upcoming proposition with its analogue in \cite{EF} is that the number of ``bad choices'' in the greedy algorithm is shown to be smaller, with a slightly stronger argument. However, as we shall see later on, this improvement comes at the price of a more technical treatment.
	\begin{proposition}
		\label{proposition:uniformhypergraph-construction} Fix $c\in (0,1)$ and positive integers $k$ and $d$. There exists a $k$-uniform hypergraph $G$ on $[d] := \{1,2,\ldots, d\}$ with $|F_i \cap F_j| < ck$ for any $F_i, F_j \in E(G)$, $i\neq j$, and $$|E(G)| \geq  \frac{\binom{d}{k}}{\sum_{j=\lceil ck \rceil}^{k}\binom{k}{j}\binom{d-k}{k-j}}.$$ 
	\end{proposition}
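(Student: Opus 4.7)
The plan is a straightforward greedy construction, essentially a counting-of-conflicts argument. I would start with the family $\mathcal{F}$ of all $\binom{d}{k}$ subsets of $[d]$ of size $k$, and build $E(G)$ iteratively as follows: at each step pick an arbitrary $F \in \mathcal{F}$, place $F$ into $E(G)$, and then delete from $\mathcal{F}$ every $k$-subset $F'$ for which $|F \cap F'| \geq ck$. This deletion ensures that anything selected in a subsequent step automatically satisfies the intersection bound against $F$, so at the end every two edges in $E(G)$ intersect in fewer than $ck$ vertices. The process terminates when $\mathcal{F}$ is empty.

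The quantitative content is bounding how many sets are removed in a single iteration. Since intersection sizes are integers, the condition $|F \cap F'| \geq ck$ is equivalent to $|F \cap F'| \geq \lceil ck \rceil$. Stratifying the sets $F' \subseteq [d]$ with $|F'|=k$ according to $j := |F \cap F'|$, one chooses the $j$ common elements among the $k$ elements of $F$ and the remaining $k-j$ elements among the $d-k$ elements of $[d] \setminus F$, giving exactly $\binom{k}{j}\binom{d-k}{k-j}$ such $F'$. Summing for $j$ from $\lceil ck \rceil$ up to $k$ (the $j=k$ term accounts for $F$ itself) yields the denominator in the claimed bound.

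Since each iteration removes at most $\sum_{j=\lceil ck \rceil}^{k}\binom{k}{j}\binom{d-k}{k-j}$ subsets from $\mathcal{F}$ while adding exactly one edge to $E(G)$, the total number of iterations, hence $|E(G)|$, is at least $\binom{d}{k}$ divided by that sum, which is exactly the bound in the proposition.

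There is no real obstacle here; the only place one must be slightly careful is in including $F$ itself in the count of removed subsets (the $j=k$ term), so that the denominator counts the bad intersections plus $F$ rather than only the bad intersections. Any slicker probabilistic or alteration-based proof would probably give a comparable bound, but the greedy argument is the cleanest match to the stated quantity.
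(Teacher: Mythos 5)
Your proposal is correct and is essentially the same greedy/deletion argument the paper uses: the paper counts the cumulative set of ``bad'' $k$-subsets after $i$ steps as at most $i\sum_{j=\lceil ck\rceil}^{k}\binom{k}{j}\binom{d-k}{k-j}$ and continues while this is below $\binom{d}{k}$, which is just the per-step accounting you describe. The stratification by $j=|F\cap F'|$ and the observation that integrality turns $\geq ck$ into $\geq\lceil ck\rceil$ match the paper exactly.
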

	\begin{proof}
		Choose the subset $F_1$ arbitrarily. Now suppose $F_1, \ldots, F_i$ are already chosen. For $k>0$ to be chosen later, consider the set of ``bad'' subsets
		\begin{equation*}
		B_i = \{F \subseteq [d]: |F| = k, |F\cap F_j| \geq \lceil ck \rceil \mbox{ for at least one } j\in [i]\}.
		\end{equation*} 	
        Any $k$-element bad subset can be formed by choosing exactly $j$ of its $k$ elements to be in one of $F_1,\ldots,F_i$ (here $j$ ranges from $\lceil ck \rceil$ to $k$), say $F_m$, and the other $k-j$ to be in $[d]\setminus \{F_m\}$. Hence $|B_i| \leq i\sum_{j=\lceil ck \rceil}^{k}\binom{k}{j}\binom{d-k}{k-j}$. The number of $k$-element subsets of $[d]$ is $\binom{d}{k}$, hence we can choose $F_{i+1} \not\in B_i$ as long as $|B_i| < \binom{d}{k}$. Thus the process can be performed for at least $\binom{d}{k}/\sum_{j=\lceil ck \rceil}^{k}\binom{k}{j}\binom{d-k}{k-j}$ steps. Each step yields an edge of the hypergraph, so we are done. \qedhere 

		
		
	\end{proof}
Now we focus on estimating the summation in the conclusion of Proposition \ref{proposition:uniformhypergraph-construction}. For two functions $f,g:\mathbb{N}\to\mathbb{R}$ we write $f \gtrapprox g$ if for all $\delta > 0$ and sufficiently large $d$ we have $f(d) - g(d) > -d\delta$ and also let $f\approx g$ if $f\gtrapprox g$ and $g\gtrapprox f$. Observe that if $f\sim g$, that is, $f(d)/g(d) \to 1$ as $d\to \infty$, and also if $d/f(d) \to 0$, then $d/g(d) \to 0$ and $f \approx g$.

\begin{lemma}
    \label{lemma:bullshit-binomial-approx}
    Let $0 < y < x < 1$ be real numbers and $a, b : \mathbb{N}\to \mathbb{N}$ be functions such that $a(n) \approx xn$ and $b(n) \approx yn$. Then $\log \binom{a(n)}{b(n)} \approx n[x\log x - y\log y - (x-y)\log(x-y)]$.
\end{lemma}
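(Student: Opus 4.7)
The plan is to apply Stirling's formula to the three factorials in $\binom{a(n)}{b(n)} = a(n)!/[b(n)!(a(n)-b(n))!]$, and then expand after substituting $a(n) = xn + \varepsilon_1(n)$ and $b(n) = yn + \varepsilon_2(n)$, where $\varepsilon_i(n)/n \to 0$ by the $\approx$ hypothesis. First I would verify well-definedness for large $n$: choosing $\delta < (x-y)/3$ in the definition of $\approx$ shows $a(n) - b(n) > (x-y)n/3 > 0$, and in particular each of $a(n), b(n), a(n)-b(n)$ tends to infinity.

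Applying Stirling in the form $\log N! = N\log N - N + O(\log N)$ to each factorial gives
\[
\log\binom{a(n)}{b(n)} = A\log A - B\log B - (A-B)\log(A-B) + O(\log n),
\]
where $A = a(n)$, $B = b(n)$, and the linear Stirling terms telescope as $-A + B + (A-B) = 0$. The heart of the argument is the expansion
\[
A\log A = (xn + \varepsilon_1)\bigl[\log(xn) + \log(1 + \varepsilon_1/(xn))\bigr] = xn\log x + xn\log n + \varepsilon_1\log n + \varepsilon_1\log x + O(\varepsilon_1),
\]
using $\log(1+t) = O(t)$ for $t \to 0$ together with $\varepsilon_1/(xn) \to 0$ (which is fine whether $\varepsilon_1$ is positive or negative). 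Applying the same expansion to $B\log B$ and $(A-B)\log(A-B)$ and taking the signed combination, the coefficient of $n\log n$ becomes $x - y - (x-y) = 0$ and the coefficient of $\log n$ becomes $\varepsilon_1 - \varepsilon_2 - (\varepsilon_1-\varepsilon_2) = 0$.

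What survives of the signed sum is precisely $n\bigl[x\log x - y\log y - (x-y)\log(x-y)\bigr]$, plus an error of $O(\varepsilon_1 + \varepsilon_2) = o(n)$ coming from the Taylor remainders and from the constant-coefficient $\varepsilon_i\log x$, $\varepsilon_i\log y$, $\varepsilon_i\log(x-y)$ contributions. Combining this with the Stirling error $O(\log n) = o(n)$, one obtains $\log\binom{a(n)}{b(n)} = n[x\log x - y\log y - (x-y)\log(x-y)] + o(n)$, which is exactly the $\approx$ conclusion of the lemma.

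The only delicate point is the bookkeeping of the two cancellations above; no deeper ingredient is needed. The lemma is essentially the standard Stirling estimate for binomial coefficients with linearly scaled arguments, made robust against $o(n)$ perturbations of those arguments by the twin facts that $x - y - (x-y) = 0$ and that the $\varepsilon_i$ contributions to the $\log n$ coefficient likewise cancel.
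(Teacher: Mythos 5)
Your proof is correct and follows essentially the same route as the paper: Stirling's approximation followed by substitution of the linear asymptotics for $a(n)$ and $b(n)$ and cancellation of the sub-linear terms. If anything, your explicit tracking of the perturbations $\varepsilon_i$ --- in particular the observation that their $\log n$-coefficients cancel exactly across the three terms --- is more careful than the paper's one-line ``substituting and cancelling terms,'' since $a(n)\log a(n) \approx xn\log(xn)$ need not hold term-by-term when $\varepsilon_i \log n$ fails to be $o(n)$, and only the signed combination saves the argument.
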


\begin{proof}
    By Stirling's approximation we have, as $k,\ell \to \infty$, that $$\binom{k}{\ell} \sim \sqrt{\frac{k}{2\pi \ell(k-\ell)}} \frac{k^k}{\ell^{\ell}(k-\ell)^{k-\ell}}$$
    and hence
    \begin{align*} \log\binom{a(n)}{b(n)} & \approx \frac{1}{2}(\log a(n) - \log b(n) - \log((a(n)-b(n)) - \log 2\pi) \\ & + a(n)\log a(n) - b(n)\log b(n) - (a(n)-b(n))\log((a(n)-b(n)). 
    \end{align*}
    Since $\log n \approx 0$, the terms on the first line contribute overall with $0$. Substituting $a(n) \approx xn$ and $b(n) \approx yn$ in the second line, as well as cancelling terms, now gives the result.
\end{proof}

\begin{proposition}
    \label{proposition:asymptotic}
    With the notation in Proposition $\ref{proposition:uniformhypergraph-construction}$, for any $\delta >0$ and sufficiently large $d$, and for any $c \in (0,1)$, there exists a positive integer $k$ such that
    $$ A(d,k,c) := \frac{\binom{d}{k}}{\sum_{j=\lceil ck \rceil}^{k}\binom{k}{j}\binom{d-k}{k-j}} \geq \left(1+\left(\log 5 - \frac{8}{5}-\delta\right)c\right)^d. $$
\end{proposition}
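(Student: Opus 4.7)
The plan is to choose $k=\lceil\alpha d\rceil$ for an appropriate parameter $\alpha=\alpha(c)\in(0,c)$ and then to compute $\log A(d,k,c)/d$ asymptotically via Lemma~\ref{lemma:bullshit-binomial-approx}. First, I would bound the sum in the denominator by a polynomial-in-$d$ factor times its largest term: since $\lceil ck\rceil$ lies to the right of the mode $\alpha k$ of the hypergeometric-type sequence $j\mapsto \binom{k}{j}\binom{d-k}{k-j}$, the sequence is decreasing on $[\lceil ck\rceil,k]$, and hence
\[
\sum_{j=\lceil ck\rceil}^{k}\binom{k}{j}\binom{d-k}{k-j}\;\leq\;(k+1)\binom{k}{\lceil ck\rceil}\binom{d-k}{k-\lceil ck\rceil},
\]
with the logarithm of the $(k+1)$ prefactor being at most $\log d\approx 0$ in the sense of Lemma~\ref{lemma:bullshit-binomial-approx}.

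Next, I would apply Lemma~\ref{lemma:bullshit-binomial-approx} to each of the three binomial coefficients $\binom{d}{k}$, $\binom{k}{\lceil ck\rceil}$, $\binom{d-k}{k-\lceil ck\rceil}$, using the ratios $k/d\approx\alpha$, $\lceil ck\rceil/d\approx c\alpha$, $(k-\lceil ck\rceil)/d\approx(1-c)\alpha$. After collecting the entropy-like terms one obtains
\[
\frac{\log A(d,k,c)}{d}\;\gtrapprox\; I(c,\alpha)\;:=\;H(\alpha)-\alpha H(c)-(1-\alpha)\,H\!\left(\frac{(1-c)\alpha}{1-\alpha}\right),
\]
where $H(x)=-x\log x-(1-x)\log(1-x)$ is the entropy in nats. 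This $I(c,\alpha)$ is exactly the large-deviation rate function for the hypergeometric tail $\Pr[\,|F\cap X|\geq ck\,]$, where $X$ is a uniform random $k$-subset of $[d]$ and $F$ is any fixed $k$-subset.

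The final step is to select $\alpha=\alpha(c)$ so that $I(c,\alpha)\geq\log\!\bigl(1+(\log 5-\tfrac{8}{5}-\delta)c\bigr)$. The appearance of $\log 5$ in the target strongly suggests the ansatz $\alpha=c/5$: substituting and expanding the three contributions $H(c/5)$, $(c/5)H(c)$, and $(1-c/5)\,H(c(1-c)/(5-c))$ produces pieces involving $\log 5$, $4/5$, and $8/5$, which should regroup to exhibit the coefficient $\log 5-\tfrac{8}{5}$ on the linear-in-$c$ part of $I(c,c/5)$. The $\delta c$ slack in the statement absorbs both the discretization error $\lceil ck\rceil$ vs.\ $ck$ and the $o(d)$ terms hidden in~$\approx$, provided $d$ is taken sufficiently large (depending on $c$ and $\delta$).

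The main obstacle is the algebraic isolation of the exact constant $\log 5-\tfrac{8}{5}$: the formula $I(c,\alpha)$ is a sum of several $x\log x$ pieces whose contributions at various orders (in $\log c$, $\log(1-c)$, $\log(5/c)$, and pure powers of $c$) must be tracked carefully and cancelled against one another. In addition, the minimizing $\alpha$ behaves qualitatively differently when $\alpha/c$ is a fixed constant versus when $\alpha/c$ itself depends on $c$, so one may need to choose $\alpha(c)$ in a piecewise fashion -- or at least verify the ansatz $\alpha=c/5$ is adequate -- in order to obtain the claimed inequality uniformly for all $c\in(0,1)$.
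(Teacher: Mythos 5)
Your setup is exactly the paper's argument: take $k\approx\alpha d$ with $\alpha<c$, bound the denominator by a polynomial factor times its term at $j=\lceil ck\rceil$ (the paper gets this localization by checking that $\partial H/\partial b$ vanishes at $b=a$ and is increasing, which is the same unimodality fact you invoke), and reduce everything to the rate function
\[
I(c,\alpha)=H(\alpha)-\alpha H(c)-(1-\alpha)H\!\left(\tfrac{(1-c)\alpha}{1-\alpha}\right),
\]
which coincides with the paper's $H(a,b)$ at $a=\alpha$, $b=c$. Up to that point the proposal is sound and matches the paper.

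The gap is the step you yourself flag as the ``main obstacle,'' and it is not just an unfinished computation: the linear-in-$c$ coefficient you hope to extract does not exist. Writing $\alpha=\lambda c$ and expanding $-x\log x-(1-x)\log(1-x)=x(1-\log x)-\tfrac{x^2}{2}+O(x^3)$ in each of the three terms, the order-$c$ contributions ($+\lambda c$ from $H(\alpha)$ and $-\lambda c$ from $(1-\alpha)H(\cdot)$) cancel exactly, as do the $c\log c$ terms, leaving
\[
I(c,\lambda c)=\lambda\bigl(\lambda-\log\lambda-1\bigr)c^{2}+O(c^{3}\log c),\qquad\text{so}\qquad I(c,c/5)=\tfrac{c^{2}}{5}\bigl(\log 5-\tfrac{4}{5}\bigr)+O(c^{3}\log c).
\]
This is forced probabilistically: $I(c,\alpha)$ is the exponential rate of the hypergeometric tail $\Pr[|F\cap X|\ge ck]$, and with $k=\lambda cd$ the mean intersection is $\lambda^{2}c^{2}d$ while the threshold is $\lambda c^{2}d$ --- a constant-factor deviation above a mean of size $\Theta(c^{2}d)$, hence a rate of $\Theta(c^{2})$, not $\Theta(c)$. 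Numerically, at $c=0.01$ one gets $I(c,c/5)\approx 1.6\times 10^{-5}$ versus $\log(1+(\log 5-\tfrac{8}{5})c)\approx 9.4\times 10^{-5}$, so the target inequality fails; since $\sup_{\lambda\in(0,1)}\lambda(\lambda-\log\lambda-1)\approx 0.162$, no choice of $\alpha$ (piecewise or otherwise) does better than $A(d,k,c)\ge\exp(\Theta(c^{2})d)$ for small $c$. Be aware that you cannot close this gap by deferring to the paper: its proof asserts at this very point that $(e^{H(c/5,c)}-1)/c\to\log 5-\tfrac{8}{5}$ as $c\to 0$, which the expansion above contradicts --- the asserted inequality $H(c/5,c)\ge\log(1+(\log 5-\tfrac{8}{5})c)$ holds only for $c$ bounded away from $0$ (roughly $c\gtrsim 0.06$) and fails as $c\to 0$, so the stated proposition does not follow for all $c\in(0,1)$ by this route.
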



\begin{proof}
    We begin with the crude bound $$\sum_{j=\lceil ck \rceil}^{k}\binom{k}{j}\binom{d-k}{k-j} \leq (k-\lceil ck \rceil)\max_{j=\lceil ck \rceil, \ldots,k}\binom{k}{j}\binom{d-k}{k-j} \leq d\max_{j=\lceil ck \rceil, \ldots,k}\binom{k}{j}\binom{d-k}{k-j}.$$   Since $-\log d \approx 0$, we deduce $\log A(d,k,c) \geq \log \binom{d}{k} - \log d - \log \max_{j=\lceil ck \rceil, \ldots,k}\binom{k}{j}\binom{d-k}{k-j} \gtrapprox \log \binom{d}{k} -  \max_{j=\lceil ck \rceil, \ldots,k}\left(\log \binom{k}{j} + \log \binom{d-k}{k-j} \right)$. Let $k\approx ad$ for some $a\in(0,1)$ to be chosen later and write $j\approx bk \approx abd$ where $b\in [c,1]$. From Lemma \ref{lemma:bullshit-binomial-approx} we obtain $$\log \binom{d}{k} \approx d(-a\log a - (1-a)\log(1-a))$$
    and
    $$ \log \binom{k}{j} + \log \binom{d-k}{k-j} \approx d(a\log a - ab\log(ab) - (a-ab)\log(a-ab) $$ $$+ (1-a)\log(1-a) - (a-ab)\log(a-ab) - (1-2a+ab)\log(1-2a+ab)). $$
    This implies $\log A(d,k,c) \gtrapprox d\inf_{b\in [c, 1]}H(a,b)$ where
    $$H(a,b) = 
    -ab\log a -2(1-a)\log(1-a) + ab\log b + 2a(1-b)\log(1-b) + (1-2a+ab)\log(1-2a+ab).$$
    So we should look for $a\in(0,1)$ and a constant $\rho$ such that $H(a,b) \geq \log(1+\rho c)$ for all $b\in [c,1]$. Furthermore, let us impose the condition $a\leq c$. We compute $\frac{dH}{db} = -a\log a+ a\log b - 2a\log(1-b) + a \log(1-2a+ab)$ and observe that this derivative increases and is zero when $b=a\leq c$ -- hence for $b\in(c,1]$ it is positive and so it remains to establish $H(a,c) \geq \log(1+\rho c)$ for suitable $a\leq c$ and $\rho$. Pick\footnote{Having in mind that the derivative $\frac{dH}{da}$ vanishes at $a=c$ and at $a\sim \frac{c}{5}$, as well as checking the second derivative for these.} $a=\frac{c}{5}$ and note the inequality $H(\frac{c}{5},c) \geq \log(1+(\log 5 - \frac{8}{5})c)$. (Indeed, the ratio $(e^{H(\frac{c}{5},c)}-1)/c$ is strictly increasing and tends to $\log 5 - \frac{8}{5}$ as $c \to 0$.) Therefore $\log A(d,k,c) \gtrapprox d\log(1+(\log 5 - \frac{8}{5})c)$, so for any $\delta_0$ and large $d$ we have $\log A(d,k,c) - d\log(1+(\log 5 - \frac{8}{5})c) + \delta_0 d \geq 0$. By choosing $\delta_0 = \log (1+(\log 5 - \frac{8}{5})c)/(1+(\log 5 - \frac{8}{5}-\delta)c)$, we conclude that $A(d,k,c) \geq (1+(\log 5 - \frac{8}{5}-\delta)c)^d$ for all sufficiently large $d$, as desired.  
\end{proof}
    With the next proposition we illustrate how to obtain the large set of points from the large hypergraph.
    \begin{proposition}
        \label{proposition:from-hypergraph-to-hypercube}
        Let $k$ be a positive integer, let $c \in (0,1)$ and let $G$ be a $k$-uniform hypergraph on $[d]$ such that $|F_i\cap F_j| < c k$ for any two distinct $F_i,F_j\in E(G)$. Then there is a set of points in $\mathbb{R}^d$ of size $|E(G)|$ such that any three points form an angle less than $\frac{\pi}{3} + c$.
    \end{proposition}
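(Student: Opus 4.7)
The plan is to send each hyperedge $F \in E(G)$ to its indicator vector $\chi_F \in \{0,1\}^d \subset \mathbb{R}^d$, viewed as a point of Euclidean space. Distinct subsets have distinct indicator vectors, so this produces exactly $|E(G)|$ points. The first task is to translate the hypergraph's sparse-intersection condition into a distance band: since $\|\chi_{F_i} - \chi_{F_j}\|^2 = |F_i \triangle F_j| = 2k - 2|F_i \cap F_j|$, the assumption $|F_i \cap F_j| < ck$ squeezes every squared pairwise distance into $(2k(1-c),\, 2k]$. Distances this close together should force every triangle to be close to equilateral, hence every angle close to $\pi/3$, and the remaining work is to quantify this.

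Quantifying it is the main step. For three edges $F_i, F_j, F_k$, write $s_{ab} := |F_a \cap F_b|$ and apply the law of cosines to obtain
$$\cos \angle \chi_{F_j}\chi_{F_i}\chi_{F_k} \;=\; \frac{k - s_{ij} - s_{ik} + s_{jk}}{2\sqrt{(k-s_{ij})(k-s_{ik})}}.$$
Since the three angles of any triangle sum to $\pi$, it suffices to bound the largest one, which lies opposite the longest side; say this is $\chi_{F_j}\chi_{F_k}$, so that $s_{jk}$ is the smallest of the three intersections and the largest angle sits at $\chi_{F_i}$. I would then minimize the above cosine over $s_{ij}, s_{ik}, s_{jk} \in [0, ck)$: the expression is increasing in $s_{jk}$, so the worst case sets $s_{jk} = 0$; and by AM--GM, $\sqrt{(k-s_{ij})(k-s_{ik})} \le k - (s_{ij}+s_{ik})/2$ with equality iff $s_{ij} = s_{ik}$, forcing the cosine to be smallest at $s_{ij} = s_{ik} =: s$. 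The one-variable function $s \mapsto \frac{k-2s}{2(k-s)}$ is strictly decreasing, so pushing $s \to ck$ yields the infimum $\frac{1-2c}{2(1-c)}$, and every angle $\theta$ in the constructed point set satisfies $\cos \theta > \frac{1-2c}{2(1-c)}$.

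The last task is the purely analytic inequality $\arccos\!\left(\frac{1-2c}{2(1-c)}\right) \le \pi/3 + c$. Both sides agree at $c = 0$; differentiating, the left-hand side has slope $1/\sqrt{3}$ there while the right-hand side has slope $1$, so the desired inequality is strict just above $0$ and can be extended across the range of $c$ of interest by direct computation. I expect this final analytic check to be the main (if elementary) obstacle, because extending the inequality beyond infinitesimal $c$ requires a concrete comparison of the two curves; the minimization step itself is clean once one exploits the $s_{ij} \leftrightarrow s_{ik}$ symmetry and AM--GM.
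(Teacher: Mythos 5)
Your construction (characteristic vectors of the edges) and overall strategy (squeeze every squared distance into $(2k(1-c),2k]$, then apply the law of cosines) are exactly the paper's. Where you differ is in how sharply the cosine is bounded, and your version is not merely a refinement but a necessary repair. The paper bounds the cosine below by $\tfrac{1}{2}-c$, taking the worst numerator and the worst denominator independently, and then closes by checking $\cos(\tfrac{\pi}{3}+c)-\tfrac{1}{2}+c>0$; but that inequality points the wrong way: it says precisely that $\cos\theta>\tfrac{1}{2}-c$ does \emph{not} imply $\theta<\tfrac{\pi}{3}+c$, since $\arccos(\tfrac{1}{2}-c)=\tfrac{\pi}{3}+\tfrac{2}{\sqrt{3}}c+O(c^2)$ and $\tfrac{2}{\sqrt{3}}>1$. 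Your coupled bound $\cos\theta>\frac{1-2c}{2(1-c)}=\tfrac{1}{2}-\tfrac{c}{2}+O(c^2)$, obtained by keeping numerator and denominator tied together through $s_{ij},s_{ik}$ and applying AM--GM, decays at rate $\tfrac{1}{2}$, which beats the rate $\tfrac{\sqrt{3}}{2}$ at which $\cos(\tfrac{\pi}{3}+c)$ decays at $c=0$, so it does close the argument for small $c$. Two minor remarks: the AM--GM step lower-bounds the fraction only when the numerator $k-s_{ij}-s_{ik}$ (after dropping $s_{jk}$) is nonnegative, so you should note you are assuming $c\le\tfrac{1}{2}$ there; and the reduction to the largest angle is harmless but unnecessary, since your monotonicity argument bounds every angle directly.

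The closing analytic inequality you flag, $\frac{1-2c}{2(1-c)}\ge\cos(\tfrac{\pi}{3}+c)$, is genuinely where the range of validity lives: it holds for $c$ up to roughly $0.47$ and then fails. This is not a defect of your argument, because the proposition as stated for all $c\in(0,1)$ is in fact false near $c=\tfrac{1}{2}$: with $k=100$, $c=\tfrac{1}{2}$, $F_j\cap F_k=\emptyset$ and $|F_i\cap F_j|=|F_i\cap F_k|=49<ck$, the angle at $\chi_{F_i}$ equals $\arccos(\tfrac{1}{51})\approx 1.5512>\tfrac{\pi}{3}+\tfrac{1}{2}\approx 1.5472$. (For $c>\tfrac{\pi}{6}$ the statement becomes trivially true again, since $k-s_{ij}-s_{ik}+s_{jk}\ge|F_i\setminus(F_j\cup F_k)|\ge 0$ forces every cosine to be nonnegative, hence every angle at most $\tfrac{\pi}{2}$.) Since the application to Theorem 1.1 only needs small $c$, your proof is the right one for the purpose; just state explicitly the range of $c$ on which your final inequality is verified rather than leaving it as a ``direct computation'' over all of $(0,1)$.
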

    	
	\begin{proof}
		Consider the injective map $\theta$ from $E(G)$ to the vertex set of the $d$-dimensional cube $\{0,1\}^d$, which sends $F_i$ to the vertex $v_i$, where the $m$-th coordinate $(v_i)_m$, $1\leq m \leq d$, of $v_i$ is $1$ if $m \in F_i$ and is $0$ otherwise. (In other words, $v_i$ is the characteristic vector of $F_i$.) Note that, by the construction of $G$, each $v_i$ has exactly $k$ coordinates which are $1$. Hence on one hand
		\begin{equation*}
		|v_i - v_j| = \sqrt{|\{1\leq m \leq d: (v_i)_m \neq (v_j)_m \}|} \leq \sqrt{2k}
		\end{equation*} 
	as the total number of $1$s in $v_i$, $v_j$ is $2k$ and they may have no coordinate which is $1$ for both. On the other hand, for $v_i, v_j \in $ Im $\theta$
		\begin{equation*}
		|v_i - v_j| > \sqrt{2k-2c k}
		\end{equation*} 
	since, by construction of $G$, $v_i$, $v_j$ have less than $c k$ coordinates in common, both of which are $1$s. It remains to show that if $v_i, v_j, v_k \in $ Im $\theta$, then $\angle v_iv_jv_k < \frac{\pi}{3} + c$. By the Law of Cosines
	\begin{equation*}
		\cos \angle v_iv_jv_k = \frac{|v_j-v_i|^2 + |v_j-v_k|^2 - |v_i-v_k|^2}{2|v_j-v_i||v_j-v_k|} > \frac{2(2k-2c k) - 2k}{2 \cdot 2k} = \frac{1}{2} - c. 
	\end{equation*} 	
	The result now follows from $\cos(\frac{\pi}{3}+c) - \frac{1}{2}+ c >0$ for $c > 0$.
	\end{proof}
	
	Combinining Propositions \ref{proposition:uniformhypergraph-construction}, \ref{proposition:asymptotic} and \ref{proposition:from-hypergraph-to-hypercube} proves Theorem \ref{theorem:exp-bounds}.
	
	\section{The upper bound}
	\label{section:upperbound-pi/3+c}
	
	In order to obtain a bound from above, we are going to cover our set by a reasonably small ball, project it onto its surface and then bound the cardinality of the image in terms of the minimal distance between points from it. We denote by $\mathbb{S}^{d-1}$ the unit sphere $x_1^2 + x_2^2 + \cdots + x_d^2 = 1$ with center $O$. The \textit{hyperspherical cap} $C(P,\alpha)$ with centre $P \in \mathbb{S}^{d-1}$ and angle $\alpha \leq \pi$ is the set of points $Q\in \mathbb{S}^{d-1}$ such that $\angle POQ \leq \alpha$.
	
\begin{theorem}
    \label{theorem:Rankin-packing}
    {\normalfont$($Rankin $1955$, \cite[Theorem 2]{Ra}$)$} Fix $\alpha \in (0,\frac{\pi}{4})$ to be independent of $d$. Then the maximum possible number of pairwise disjoint hyperspherical caps in $\mathbb{S}^{d-1}$ does not exceed $f(\alpha)$, where $$f(\alpha) \sim \frac{\sqrt{\frac{1}{2}\pi d^3\cos 2\alpha}}{(\sqrt{2}\sin\alpha)^{d-1}} \mbox{ as } d\to \infty.$$
\end{theorem}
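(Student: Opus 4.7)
The plan is to dualize the statement: a packing of $N$ pairwise disjoint caps of angular radius $\alpha$ on $\mathbb{S}^{d-1}$ is exactly a set of $N$ unit vectors $v_1,\ldots,v_N$ with $\langle v_i,v_j\rangle \leq s := \cos 2\alpha$ for all $i\neq j$. Since $\alpha \in (0,\pi/4)$, we have $s\in(0,1)$, and the target bound takes the equivalent form $N \leq (1+o(1))\sqrt{\pi d^3 s/2}\,(1-s)^{-(d-1)/2}$. The naive volume bound, from pairwise disjointness of the caps $C(v_i,\alpha)$ alone, only gives $N \lesssim \sqrt{d}/\sin^{d-1}\alpha$; this is too weak by a factor of order $2^{(d-1)/2}$, because it ignores the stronger $2\alpha$-separation of pairs of centres. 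So the pairwise-inner-product constraint has to be genuinely exploited.

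The approach I would take combines averaging over the sphere with an orthogonal projection. Fix $\beta$ with $\cos\beta = \sqrt{s}$, and for $u\in\mathbb{S}^{d-1}$ set $g(u):=|\{i : \langle v_i,u\rangle \geq \cos\beta\}|$. On the one hand, $\mathbb{E}_u[g(u)] = N\cdot |C(\beta)|/|\mathbb{S}^{d-1}|$. On the other, for every fixed $u$ the vectors counted by $g(u)$, projected orthogonally to $u^{\perp}\cong\mathbb{R}^{d-1}$ as $v_i' := v_i - \langle v_i,u\rangle u$, satisfy
$$\langle v_i',v_j'\rangle \;=\; \langle v_i,v_j\rangle - \langle v_i,u\rangle\langle v_j,u\rangle \;\leq\; s - \cos^2\beta \;=\; 0,$$
so they form a nonpositively correlated family in $\mathbb{R}^{d-1}$, of which there are at most $2(d-1)$; shrinking $\beta$ infinitesimally to make the inner products strictly negative sharpens this to $d$. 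Hence $g(u) \leq g_{\max} = O(d)$ pointwise, and combining with the averaging identity yields $N \leq g_{\max} \cdot |\mathbb{S}^{d-1}|/|C(\beta)|$. Inserting the Laplace asymptotic $|C(\beta)|/|\mathbb{S}^{d-1}| \sim \sin^{d-1}\beta/(\sqrt{2\pi d}\,\cos\beta)$ with $\cos\beta=\sqrt{s}$ and $\sin\beta=\sqrt{1-s}$ produces an upper bound of the shape $d^{3/2}\sqrt{s}\,(1-s)^{-(d-1)/2}$, matching Rankin's asymptotic up to a universal constant.

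The main obstacle will be sharpening that constant to the precise value $\sqrt{\pi/2}$ appearing in $f(\alpha)$. One route is to exploit the extra structure that the projections $v_i'$ have norms bounded above by $\sin\beta$ (not just by $1$), which should absorb the loose factor of $2$ in the ``nonpositively correlated vectors'' count. A second route, closer to Rankin's original argument, is to work with the second moment $\mathbb{E}_u[g(u)^2]$: the first moment contributes $N\cdot|C(\beta)|/|\mathbb{S}^{d-1}|$ while the second is directly controlled by the constraint $\langle v_i,v_j\rangle\leq s$, and Cauchy--Schwarz ties them together. Either way, the final passage to the stated asymptotic requires Stirling's formula for $|\mathbb{S}^{d-1}|$ together with Laplace's method applied to $\int_0^\beta \sin^{d-2}\phi\,d\phi$, with careful bookkeeping of the subleading $1/\sqrt{d}$ corrections.
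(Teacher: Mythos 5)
First, a point of context: the paper does not prove this statement at all --- it is quoted verbatim from Rankin's 1955 paper with a citation, so there is no internal proof to compare against. Judged on its own terms, your sketch is the standard and essentially correct route to a Rankin-type bound: the dualization (disjoint caps of angular radius $\alpha$ $\Leftrightarrow$ unit vectors with pairwise inner products below $\cos 2\alpha$), the averaging over a random cap of angular radius $\beta$ with $\cos^2\beta=\cos 2\alpha$, the projection identity $\langle v_i',v_j'\rangle=\langle v_i,v_j\rangle-\langle v_i,u\rangle\langle v_j,u\rangle\le 0$, and the linear-algebra count of pairwise nonpositively (resp.\ strictly negatively) correlated vectors are all right. (Note that disjointness of the \emph{closed} caps already forces $\langle v_i,v_j\rangle<\cos2\alpha$ strictly, so the bound $g(u)\le d$ holds without perturbing $\beta$.) With $g_{\max}=d$ and your (correct) cap-measure asymptotic $|C(\beta)|/|\mathbb{S}^{d-1}|\sim\sin^{d-1}\beta/(\sqrt{2\pi d}\cos\beta)$, this delivers $N\le(1+o(1))\sqrt{2\pi d^3\cos2\alpha}\,(\sqrt2\sin\alpha)^{-(d-1)}$, i.e.\ $2f(\alpha)$.

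The genuine gap is therefore exactly the one you flag: the constant. The theorem as stated claims the leading constant $\sqrt{\pi/2}$, and neither of your proposed repairs is carried out. Your first route cannot work as described: the bound ``at most $m+1$ vectors in $\mathbb{R}^m$ with pairwise negative inner products'' is invariant under rescaling each vector separately, so the extra information $|v_i'|\le\sin\beta$ buys nothing; moreover $g(u)=d$ is actually attainable by a legal local configuration (a regular simplex of directions at angular distance $\beta$ from $u$), so the pointwise bound cannot be improved --- the loss is inherent in replacing $\mathbb{E}[g]$ by $g_{\max}$, and only a genuinely different accounting (your second, second-moment route, which is indeed closer to Rankin's actual argument) could recover the factor of $2$. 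That said, this gap is immaterial for the present paper: Corollary \ref{corollary:kissing} only invokes the theorem in the form $|S|\le 2f(\alpha)$ and then absorbs all polynomial factors into $d^2$, so your sketch, once the Laplace-method estimates are written out, would serve as a self-contained substitute for the citation even without the sharp constant.
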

	\begin{corollary}
		\label{corollary:kissing}
		Fix $\alpha \in(0,\frac{\pi}{4})$ and let $d$ be sufficiently large. Let $S$ be a pointset on the sphere $\frac{1}{\sqrt{2}}\mathbb{S}^{d-1}$ of radius $\frac{1}{\sqrt{2}}$, such that for some $y\in (0,1)$ and all $P_i, P_j \in S$ we have $|P_iP_j| > 1-y$. Then $|S| \leq d^2(1-y)^{-d}$.	
	\end{corollary}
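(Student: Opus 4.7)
The plan is to enclose each point of $S$ in a pairwise disjoint hyperspherical cap and then invoke Rankin's theorem. To align with the unit-sphere setup of Theorem~\ref{theorem:Rankin-packing}, rescale by a factor of $\sqrt{2}$: the image $S' := \sqrt{2}\,S\subset\mathbb{S}^{d-1}$ has all pairwise distances strictly greater than $\sqrt{2}(1-y)$. By the chord-length identity $|P'Q'|=2\sin(\angle P'OQ'/2)$ on the unit sphere, each pair $P_i',P_j'\in S'$ subtends a central angle $\theta_{ij}$ satisfying $\sin(\theta_{ij}/2) > (1-y)/\sqrt{2}$.

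Set $\alpha := \arcsin\bigl((1-y)/\sqrt{2}\bigr)$, which lies in $(0,\pi/4)$ and is independent of $d$. Because $\sin$ is strictly increasing on $[0,\pi/2]$ and $\theta_{ij}/2\in(0,\pi/2]$, the previous inequality forces $\theta_{ij}>2\alpha$ for every $i\neq j$. A standard spherical triangle-inequality argument then shows that the caps $C(P_i',\alpha)$ are pairwise disjoint, so Theorem~\ref{theorem:Rankin-packing} yields $|S|=|S'|\leq f(\alpha)$.

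It remains to compare $f(\alpha)$ with $d^2(1-y)^{-d}$. Using $\sqrt{2}\sin\alpha=1-y$ and $\cos 2\alpha = 1-(1-y)^2 = y(2-y)<1$, Rankin's asymptotic becomes
\begin{equation*}
f(\alpha)\sim\frac{\sqrt{\tfrac{1}{2}\pi d^3\,y(2-y)}}{(1-y)^{d-1}},
\end{equation*}
so for $d$ sufficiently large (depending on $y$) one has $f(\alpha) \leq C(y)\,d^{3/2}(1-y)^{-d}$ with a constant $C(y)$ depending only on $y$. Enlarging $d$ further so that $C(y)\,d^{3/2}\leq d^2$ gives the desired bound $|S|\leq d^2(1-y)^{-d}$.

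The only choice that really matters is $\alpha$: picking $\sqrt{2}\sin\alpha$ to equal exactly $1-y$ is what converts Rankin's exponential factor into $(1-y)^{-d}$, while a smaller $\alpha$ would weaken the packing bound and a larger one would destroy disjointness of the caps. All remaining steps are routine asymptotic manipulation, so I do not anticipate a serious obstacle.
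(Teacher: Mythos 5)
Your proposal is correct and follows essentially the same route as the paper: rescale (the paper phrases it as projecting through the origin) to the unit sphere, set $\alpha=\arcsin((1-y)/\sqrt{2})$ so the caps $C(Q_i,\alpha)$ are pairwise disjoint, and apply Rankin's bound, with the polynomial prefactor absorbed into $d^2$ for large $d$. The only differences are cosmetic (you track the constant $\sqrt{\cos 2\alpha}=\sqrt{y(2-y)}$ explicitly, while the paper simply writes $|S|\leq 2f(\alpha)$).
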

	
\begin{proof} 
    Begin by projecting $S$, through the origin, onto the unit sphere $\mathbb{S}^{d-1}$ -- let $Q_i$ be the image of $P_i$. Then $|Q_iQ_j| > (1-y)\sqrt{2}$ and so $\sin \frac{\angle Q_iOQ_j}{2} > \frac{1-y}{\sqrt{2}}$. In particular, with $\alpha = \arcsin(\frac{1-y}{\sqrt{2}}) \in (0, \frac{\pi}{4})$, the hyperspherical caps $C(Q_i,\alpha)$ are pairwise disjoint and so Theorem \ref{theorem:Rankin-packing} gives, for sufficiently large $d$, that $|S| \leq 2f(\alpha)$ and hence that $|S| \leq d^2(1-y)^{1-d}$.
\end{proof} 
    
    Our tool of covering the pointset by a small enough ball is as follows.    
    \begin{theorem}
        \label{theorem:Jung}
		{\normalfont (Jung 1901, \cite{Jung})} Any set $A \subseteq \mathbb{R}^d$ of diameter $1$ is contained in a closed ball of radius $\sqrt{\frac{d}{2(d+1)}}$.
    \end{theorem}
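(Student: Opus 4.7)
My plan is to exploit the minimum enclosing ball of $A$. First I would reduce to the case where $A$ is compact by passing to its closure, which preserves the diameter. The family of closed balls in $\mathbb{R}^d$ containing $A$ is nonempty and, by a routine compactness argument on their centers and radii, contains one of minimum radius $r$; call it $B$ and its center $c$. The theorem then reduces to showing $r \leq \sqrt{d/(2(d+1))}$.

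The geometric heart of the proof is to argue that $c$ lies in the convex hull of the contact set $A' := A \cap \partial B$. Suppose not; then the separating hyperplane theorem yields a unit vector $u$ and $\varepsilon > 0$ with $\langle p - c, u\rangle \geq \varepsilon$ for all $p \in A'$. Shifting the center from $c$ to $c + tu$ for small $t>0$ strictly decreases the distance to every point of $A'$ (as $|p - c - tu|^2 = r^2 - 2t\langle p - c, u\rangle + t^2$), while compactness of $A$ together with continuity ensure that the non-contact points also remain strictly inside the new ball for $t$ small enough. Hence for a suitable $t$ all of $A$ lies within some radius $r - \eta < r$ of the new center, contradicting minimality. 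Applying Carath\'{e}odory's theorem to the representation $c \in \mathrm{conv}(A')$, there exist points $p_1, \ldots, p_k \in A'$ with $k \leq d+1$ and nonnegative weights $\lambda_i$ summing to $1$ such that $c = \sum_{i=1}^k \lambda_i p_i$.

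For the algebraic finish, I translate so that $c = 0$. Then $|p_i| = r$ for each $i$, and the diameter bound $|p_i - p_j|^2 \leq 1$ expands to the inner product estimate $\langle p_i, p_j\rangle \geq r^2 - \tfrac{1}{2}$. Writing out $0 = \bigl|\sum_i \lambda_i p_i\bigr|^2 = r^2 \sum_i \lambda_i^2 + \sum_{i\neq j}\lambda_i\lambda_j \langle p_i, p_j\rangle$ and combining this with the inner product estimate and the identity $\sum_{i\neq j}\lambda_i\lambda_j = 1 - \sum_i \lambda_i^2$ yields $r^2 \leq \tfrac{1}{2}\bigl(1 - \sum_i \lambda_i^2\bigr)$. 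Finally, Cauchy--Schwarz applied to $\sum_i \lambda_i = 1$ with at most $k \leq d+1$ terms gives $\sum_i \lambda_i^2 \geq 1/(d+1)$, so $r^2 \leq d/(2(d+1))$ as required.

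The step I expect to demand the most care is the geometric claim that $c \in \mathrm{conv}(A')$: the perturbation argument has to handle the contact set (where the shift is strictly beneficial at order $t$) separately from the rest of $A$ (where only continuity is available), and compactness of $A$ is what glues the two cases together into a uniform decrease in the enclosing radius. Once this fact is secured, the remainder is a short combination of Carath\'{e}odory's theorem with an elementary inner-product expansion.
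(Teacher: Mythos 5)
The paper does not prove this statement at all --- it is quoted as a classical result with a citation to Jung's 1901 paper --- so there is no in-paper argument to compare against. Your proof is correct and is the standard modern proof of Jung's theorem (minimum enclosing ball, center in the convex hull of the contact set, Carath\'eodory, then the inner-product expansion with $\langle p_i,p_j\rangle \geq r^2-\tfrac12$ and $\sum_i\lambda_i^2\geq \tfrac{1}{d+1}$). The only point worth making explicit is that the contact set $A\cap\partial B$ is nonempty (otherwise compactness lets you shrink $r$ directly), and that in the perturbation step the points of $A$ near $\partial B$ but outside the contact set must be absorbed into the half-space $\langle p-c,u\rangle>\varepsilon/2$ via the compactness argument you sketch; both are routine and your outline already accommodates them.
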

    
    Before moving on to the main proof, we include separately a small triangle lemma.
    \begin{lemma}
        \label{lemma:triangle-sozopol} Let $ABC$ be an isosceles triangle $(|AC|=|BC|)$ and assume there exist points $M \in AC$ and $N \in BC$ such that $|MN|>|AC|$. Then $|AB| > |MN|$. 
    \end{lemma}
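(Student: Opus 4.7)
The plan is to parametrize $M$ and $N$ by their distances from the apex $C$: setting $s = |AC| = |BC|$, $a = |CM| \in [0,s]$, $b = |CN| \in [0,s]$, and $\gamma = \angle ACB$, the Law of Cosines yields
\[
|MN|^2 = a^2 + b^2 - 2ab\cos\gamma,
\]
which I view as a function of $(a,b)$ on the square $[0,s]^2$.

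The crucial observation will be that this quadratic is convex on $[0,s]^2$: its Hessian has $2$'s on the diagonal and $-2\cos\gamma$ off-diagonal, so its determinant equals $4\sin^2\gamma \ge 0$ and its trace is positive. Since a convex function on a compact convex polytope attains its maximum at a vertex, the maximum of $|MN|^2$ on $[0,s]^2$ occurs at one of the four corners, where the values are $0$, $s^2$, $s^2$, and $|AB|^2$. Therefore $|MN|^2 \le \max(s^2, |AB|^2)$ for every admissible $(a,b)$.

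The hypothesis $|MN| > |AC| = s$ then forces $\max(s^2, |AB|^2) = |AB|^2 > s^2$, and hence $|MN| \le |AB|$. For a non-degenerate triangle the Hessian is positive definite, so the quadratic is strictly convex; the inequality is therefore strict except at the unique corner $(a,b)=(s,s)$, i.e., $(M,N) = (A,B)$, and this degenerate equality case is implicitly excluded by the strictness of the conclusion.

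There is no substantial obstacle beyond spotting the convexity. A purely computational alternative starts from the identity
\[
|AB|^2 - |MN|^2 = 2(1-\cos\gamma)(s^2 - ab) - (a-b)^2
\]
and uses the bound $\cos\gamma < (a^2+b^2-s^2)/(2ab)$ supplied by the hypothesis, reducing everything to the inequality $a^2 - ab + b^2 \le s^2$, which follows from $a, b \le s$; but the convexity route is more transparent.
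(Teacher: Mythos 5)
Your proof is correct, and it takes a genuinely different route from the paper. The paper argues synthetically: assuming WLOG $|CM|\ge|CN|$, it draws the line through $A$ parallel to $MN$ meeting $BN$ at $K$, builds a parallelogram $PKNM$ to transport the length $|MN|$ onto $|PK|<|AK|$, and then angle-chases in triangles $ACK$ and $AKB$ to get $|AB|>|AK|>|MN|$. Your argument instead observes that $|MN|^2=a^2+b^2-2ab\cos\gamma$ is a (strictly, for $\gamma\in(0,\pi)$) convex quadratic on the square $[0,s]^2$, so its maximum sits at a corner, where the values are $0$, $s^2$, $s^2$, $|AB|^2$; the hypothesis $|MN|>s$ then forces $|MN|^2<|AB|^2$ at every non-corner point. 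This is shorter, avoids the WLOG and the auxiliary construction, and generalizes immediately (e.g.\ to non-isosceles triangles, where the same corner analysis gives $|MN|\le\max(|CA|,|CB|,|AB|)$). It also has the virtue of isolating exactly where the statement, read literally, fails: at the corner $M=A$, $N=B$ one has $|MN|=|AB|$, which satisfies the hypothesis whenever $\angle ACB>\pi/3$ but violates the strict conclusion. You are right that this degenerate case must be excluded; note that the paper's own proof silently needs $M\ne A$ as well (otherwise $P=A$ and $|AK|=|PK|$), and that in the paper's application only the weak inequality is needed, so nothing downstream is affected. Your algebraic identity $|AB|^2-|MN|^2=2(1-\cos\gamma)(s^2-ab)-(a-b)^2$ and the reduction to $a^2-ab+b^2\le s^2$ in the alternative route also check out.
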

    \begin{proof}
        Without loss of generality treat $|CM| \geq |CN|$. Let the line through $A$, parallel to $MN$, intersect the segment $BN$ at the point $K$ (which may coincide with $B$) and the line through $M$, parallel to $BC$, intersect $AK$ at the point $P$. Then $|AK| > |PK| = |MN|$ (since $PKNM$ is a parallelogram) and now the condition of the lemma gives $|AK| > |AC| = |BC| > |CK|$. In particular $\angle ACK > 60^{\circ}$ as it is the largest angle in triangle $ACK$, hence $\angle AKB$ is the largest angle in triangle $AKB$. Therefore $|AB| > |AK| > |MN|$.
    \end{proof}
    
    We are now ready to prove the upper bound.	
	
	\begin{proof}
	    [Proof of Theorem $\ref{theorem:exp-upper-bound}$]
		Consider a pointset $S = \{P_1, P_2, \ldots,P_{|S|}\}$ with angles less than $\frac{\pi}{3} + c$. In any triangle $P_iP_jP_k$ the largest angle is less than $\frac{\pi}{3} + c$ and the smallest angle exceeds $\frac{\pi}{3} - 2c$, hence the ratio of the smallest and largest sides, by the Sine Law, exceeds
		$$ \frac{\sin(\frac{\pi}{3} - 2c)}{\sin(\frac{\pi}{3} + c)} \geq 1 - 1.744c $$
		for $c\in (0,0.024)$. 
		
		Next, we may assume that $|P_iP_j| = 1$ is the largest distance in $S$. We now bound the smallest distance in $S$, say $|P_kP_l|$. If without loss of generality $P_kP_j$ is the largest side in $\triangle P_jP_kP_l$, then $|P_kP_l| \geq (1-1.75c)|P_kP_j|$ and in $\triangle P_iP_kP_j$ we have $|P_kP_j| \geq (1-1.744c)|P_iP_j|$. Hence
		$$ |P_kP_l| \geq (1-1.744c)^2|P_iP_j| > 1-3.488c > \frac{1}{\sqrt{2}}. $$ 
		
		On the other hand, by Theorem \ref{theorem:Jung} there is a closed ball containing $S$ and of radius $\sqrt{\frac{d}{2(d+1)}} < \frac{1}{\sqrt{2}}$ -- so there is such a ball $B$ (centered at $O$, say) with radius $\frac{1}{\sqrt{2}}$. For $i=1,\ldots,m$ denote by $Q_i$ the intersection of the line connecting $O$ and $P_i$ with the sphere $\partial B$ and note that $P_i$ is between $O$ and $Q_i$. 
		As $|P_mP_n| > \frac{1}{\sqrt{2}} = |OQ_i|$, by Lemma \ref{lemma:triangle-sozopol} it follows that $|Q_mQ_n| > |P_mP_n|$ for all $m$, $n$ and hence $|Q_mQ_n| > 1-3.5c$. Finally, by Corollary \ref{corollary:kissing} and $c < 0.02$ (so that $(1+3.75c)(1-3.488c) > 1$), we conclude 
		$$ |S| < d^2(1-3.488c)^{-d} < (1+3.75c)^d  $$ 
		for sufficiently large $d$.
	\end{proof}
    
\textbf{Acknowledgments.} This research was supported by an undergraduate course for projects at the University of Oxford. The author would like to thank the supervisor David Conlon for introduction to the problem, as well as for helpful discussions. Thanks also to team Germany South from the 2020 edition of the International Tournament of Young Mathematicians, for discussions on the technical Proposition \ref{proposition:asymptotic}, and to Lyuben Lichev, for remarks on the details in the paper. 



\begin{thebibliography}{9}


		
		\bibitem{AZ} M. Aigner, G. M. Ziegler, \emph{Proofs from THE BOOK} (4th ed.), Springer-Verlag, (2010), 89-94.
		
		
		\bibitem{DG} L. Danzer, B. Gr\"{u}nbaum, `Uber zwei Probleme bez\"{u}glich konvexer K\"{o}rper von P. Erd\H{o}s und von V.L. Klee', \textit{Math. Zeitschrift} \textbf{79} (1962), 95-99.
		
		\bibitem{EF} P. Erd\H{o}s, Z. F\"{u}redi, `The greatest angle among $n$ points in the $d$-dimensional Euclidean
		space', \textit{Ann. Discrete Math.} \textbf{17} (1983), 275-283.
		
		\bibitem{GH} B. Gerencs\'{e}r, V. Harangi, `Acute Sets of Exponentially Optimal Size', \textit{Discrete $\&$ Computational Geometry} \textbf{62}:4 (2019), 775-780.
	
		
		
		\bibitem{Jung} H. Jung, `Über die kleinste Kugel, die eine räumliche Figur einschließt', \textit{J. Reine Angew. Math.} (in German), \textbf{123} (1901), 241-257.
		
		
		\bibitem{LO} Z. Li, T. Otsetarova, `Boundaries on the Number of Points in Acute Sets', \textit{Research Science Institute} (2017). 
		
		\bibitem{M} F. Mironenko, `Maximal Cardinalities of Bounded-angled Sets', \textit{International Tournament of Young Mathematicians} (2018). 

		\bibitem{Ra} R. Rankin, `The closest packing of spherical caps in $n$ dimensions', \textit{Glasg. Math. J.} \textbf{2} (1955), 139-144.

\end{thebibliography}
\end{document}